   \edef\Gin@extensions{\Gin@extensions,.mps}
\tikzset{
  column sep/.code=\def\pgfmatrixcolumnsep{\pgf@matrix@xscale*(#1)},
  row sep/.code   =\def\pgfmatrixrowsep{\pgf@matrix@yscale*(#1)},
  matrix xscale/.code=%
    \pgfmathsetmacro\pgf@matrix@xscale{\pgf@matrix@xscale*(#1)},
  matrix yscale/.code=%
    \pgfmathsetmacro\pgf@matrix@yscale{\pgf@matrix@yscale*(#1)},
  matrix scale/.style={/tikz/matrix xscale={#1},/tikz/matrix yscale={#1}}}
\def\pgf@matrix@xscale{1}
\def\pgf@matrix@yscale{1}
\newtheorem{theorem}{Theorem}
\newtheorem{proposition}{Proposition}
\newtheorem*{theorem*}{Theorem}
\theoremstyle{definition}
\newtheorem{definition}{Definition}
\newtheorem{notation}{Notation}
\algnewcommand\algorithmicinput{\textbf{Input:}}
\algnewcommand\INPUT{\item[\algorithmicinput]}
\algnewcommand\algorithmicoutput{\textbf{Output:}}
\algnewcommand\OUTPUT{\item[\algorithmicoutput]}
\algnewcommand\algorithmiccomplexity{\textbf{Complexity:}}
\algnewcommand\COMPLEXITY{\item[\algorithmiccomplexity]}
\algnewcommand\algorithmicproc{\textbf{Procedure:}}
\algnewcommand\PROCEDURE{\item[\algorithmicproc]}
\newlength{\continueindent}
\newcommand*{\ALG@customparshape}{\parshape 2 \leftmargin \linewidth \dimexpr\ALG@tlm+\continueindent\relax \dimexpr\linewidth+\leftmargin-\ALG@tlm-\continueindent\relax}
\apptocmd{\ALG@beginblock}{\ALG@customparshape}{}{\errmessage{failed to patch}}
\theoremstyle{remark}
\newtheorem{remark}{Remark}
\definecolor{DarkBlue}{rgb}{0,0.1,0.55}
\numberwithin{equation}{section}
\newcommand{\eval}{\mathrm{eval}}
\newcommand {\hide}[1]{}
\newcommand {\ZZ} {\mathrm{Zer}}
\newcommand {\RR} {\mathrm{Reali}}
\newcommand {\Z}  {{\mathbb Z}}
\renewcommand {\D}     {\mathrm{D}}
\newcommand {\K}     {\mathrm{K}}
 \newcommand {\N}         {{\mathbb N}}
\newcommand {\A}     {\mathrm{A}}
\newcommand {\sign}        {\mathrm{sign}}
\newcommand {\eps} {{\varepsilon}}
\newcommand {\R} {\mathrm{R}}
\newcommand {\Ext} {\mathrm{Ext}}
\newcommand {\la}   {{\langle}}
\newcommand {\ra}   {{\rangle}}
\newcommand{\udots}{{\mathinner{\mskip1mu\raise1pt\vbox{\kern7pt\hbox{.}}\mskip2mu\raise4pt\hbox{.}\mskip2mu\raise7pt\hbox{.}\mskip1mu}}}
\newcommand{\SIGN}  {\mathrm{SIGN}}
\newcommand {\bit} {\mathrm {bit}}
\def\strictsubset{\mathrel{\mathop{\kern 0pt \SRbset}\limits_{\not=}}}
\def\addots{\mathinner{\mkern1mu
\raise1pt\vbox{\kern7pt\hbox{.}}
\mkern2mu\raise4pt\hbox{.}\mkern2mu
\raise7pt\hbox{.}\mkern1mu}}
\begin{document}

\title{Quantitative Curve Selection Lemma}

\author{
  Saugata Basu
  \and
  Marie-Fran{\c c}oise Roy
}
\dedicatory{Dedicated to the memory of Professor Masahiro Shiota.}
\thanks{Basu was partially supported by NSF grants CCF-1618918, DMS-1620271 and CCF-1910441.}

\maketitle

%%{\centering\footnotesize Dedicated to the memory of Professor Masahiro Shiota.\par}

\begin{abstract}
  We prove a quantitative version of the curve selection lemma. Denoting by $s,d,k$ bounds the number, the maximum total degree and the number of variables  
of the polynomials describing a semi-algebraic set $S$ and a point $x$ in $\bar S$, we find a semi-algebraic path starting at $x$ and entering in $S$ with a description of degree $(O(d)^{3k+3},O(d)^{k})$ (using a precise definition of the description of a semi-algebraic path and its degree given in the paper).
  As a consequence, we prove that there exists a semi-algebraic path starting at $x$ and entering in $S$,
  such that the degree of the Zariski closure of the image of this path  is bounded by $O(d)^{4k+3}$,
  improving a result in \cite{Kurdyka2014}.
  We also give an algorithm for 
  describing
  the real isolated points of $S$ 
  whose complexity is bounded by
  $s^{2 k+1}d^{O(k)}$
  improving a result in \cite{LeMohabTimo}.
\end{abstract}

\section{Introduction and statement}

The Curve Selection Lemma for semi-algebraic sets is the following result.

We fix a real closed field $\R$ (typically, $\R =\mathbb{R}$).

\begin{theorem}[Curve Selection Lemma]
\label{csl}
  Let $S \subset \R^k$ be a semi-algebraic set and $x \in \bar{S}$. Then there
  exists a positive element $t_0$ of $\R$, and a semi-algebraic path $\varphi$ from
  $[0, t_0)$ to $\R^k$ such that $\varphi (0) = x$ and $\varphi ((0, t_0)) \subset S$.
\end{theorem}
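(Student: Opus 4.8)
The plan is to construct the path first over a non-Archimedean real closed extension of $\R$ by an infinitesimal, and then to descend to $\R$ via the Tarski--Seidenberg transfer principle. After a translation we may assume $x=0$, and after replacing $S$ by its intersection with the open unit ball --- which keeps $0$ in $\bar S$ and turns any path entering that intersection near $0$ into a path entering the original $S$ --- we may assume $S$ bounded. Fix a quantifier-free formula $\Phi$, with coefficients in $\R$, defining $S$.

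\emph{An infinitesimal point of $S$.} Let $\R\langle\eps\rangle$ be the real closed field of algebraic Puiseux series over $\R$ in a positive infinitesimal $\eps$, and let $\Ext(S,\R\langle\eps\rangle)\subseteq\R\langle\eps\rangle^{k}$ be the realization of $\Phi$ there. We use repeatedly the elementary transfer fact that a first-order formula in one free variable, with coefficients in $\R$, is satisfied by the infinitesimal $\eps$ in $\R\langle\eps\rangle$ if and only if it holds on some interval $(0,t_0)\subseteq\R$. Since $0\in\bar S$, the formula $\exists y\,\bigl(\sum_{i}y_i^{2}<\delta^{2}\wedge\Phi(y)\bigr)$ in the variable $\delta$ holds on a right-neighborhood of $0$ in $\R$, hence holds at $\delta:=\eps$; so there is $y\in\Ext(S,\R\langle\eps\rangle)$ with every coordinate infinitesimal. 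Each $y_i$ is then bounded over $\R$, hence lies in the valuation ring of $\R\langle\eps\rangle$ with limit $\lim_\eps y_i=0$; passing from $\eps$ to a suitable root $\eps^{1/q}$ of $\eps$ (again a positive infinitesimal, generating the same field), we may also assume each $y_i\in\R[[\eps]]$ and is algebraic over $\R(\eps)$.

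\emph{Descending to a semi-algebraic path.} For each $i$ fix a $P_i\in\R[\eps,T]$, square-free in $T$, with $P_i(\eps,y_i)=0$, and a quantifier-free formula $\theta_i(u,v)$ --- a Thom encoding of $y_i$ as a root of $P_i(u,\cdot)$, i.e., a prescription of the signs of the iterated $T$-derivatives of $P_i$ at $v$ --- such that $\theta_i(\eps,y_i)$ holds and, for each fixed small $u>0$ in $\R$, at most one $v$ satisfies $\theta_i(u,v)$. Applying the transfer fact to the formula $\exists!\,v\,\theta_i(u,v)$ (true at $u=\eps$, with witness $y_i$) produces a semi-algebraic function $\varphi_i\colon(0,t_0)\to\R$. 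Applying it to $\forall v_1\cdots v_k\,\bigl(\bigwedge_i\theta_i(u,v_i)\Rightarrow\Phi(v_1,\dots,v_k)\bigr)$ (true at $u=\eps$ because $y\in\Ext(S,\R\langle\eps\rangle)$) gives $\varphi(t):=(\varphi_1(t),\dots,\varphi_k(t))\in S$ for all $t$ in a subinterval. Applying it to $\forall v\,\bigl(\theta_i(u,v)\Rightarrow v^{2}<a^{2}\bigr)$ for every $a\in\R_{>0}$ (true at $u=\eps$ because $\lim_\eps y_i=0$) gives $\varphi_i(t)\to 0$ as $t\to 0^{+}$. After shrinking $t_0$ each $\varphi_i$ is continuous (being then a simple branch of $P_i(t,\cdot)=0$); setting $\varphi(0):=x$ gives the required semi-algebraic path $\varphi\colon[0,t_0)\to\R^{k}$.

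\emph{Main difficulty, and an alternative.} The whole substance lies in the transfer dictionary between $\R\langle\eps\rangle$ and $\R$: isolating each coordinate $y_i$ of the infinitesimal point by a quantifier-free formula, and transferring the truth of $\Phi$ at $y$ into its truth at $\varphi(t)$ throughout a right-neighborhood of $0$, together with the extraction of genuine continuous semi-algebraic functions from algebraic Puiseux series. A purely real --- but more laborious --- alternative is an induction on $k$ using cylindrical algebraic decomposition: choose a cell $C\subseteq S$ with $x\in\bar C$, present $C$ as a section or a sector over a cell $C'\subseteq\R^{k-1}$ with $\pi(x)\in\overline{C'}$, apply the inductive hypothesis to obtain a path into $C'$, and lift it through the cell; the section/sector case analysis and the continuity of the lift at its endpoint are then the delicate points.
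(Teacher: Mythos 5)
Your proof is correct and follows essentially the same route as the paper's sketch: produce an infinitesimal point $y_\eps$ in $\Ext(S,\R\langle\eps\rangle)$ via Tarski--Seidenberg, then descend to a semi-algebraic path on a right-neighborhood of $0$. The one stylistic difference is in the descent: the paper simply invokes the structural fact that $\R\langle\eps\rangle$ is the field of germs of continuous semi-algebraic functions to the right of $0$ (so that the coordinates of $y_\eps$ literally \emph{are} the path), whereas you re-derive this by hand --- isolating each coordinate by a Thom encoding, transferring $\exists!\,v\,\theta_i(u,v)$ and the membership/limit formulas, and invoking continuity of the Thom-encoded root branch --- which is a self-contained unpacking of the same fact rather than a different argument.
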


This result, due to Lojasiewicz 
\cite{Loj1,Loj3} (see also \cite{Milnor3}), 
is one of the basic building blocks of
semi-algebraic, semi-analytic  and o-minimal 
geometry 
and it has numerous applications.
In this paper we consider this result from a quantitative point of view in the semi-algebraic case.
The quantitative study of the Curve Selection Lemma started with \cite{Kurdyka2014}.
The aim of this paper is to present a different approach and to improve the 
result.

We sketch one of the proofs of Theorem \ref{csl}, coming from
\cite[Theorem 3.19]{BPRbook2}, which is relevant for this paper. 

The proof uses properties of the extension of a semi-algebraic set $S \subset \R^k$ to a real closed field $\R'$ containing $\R$. By definition  $\Ext (S, \R' )$ is the semi-algebraic set in $\R^k$ defined as the realization in $\R'^k$ of a quantifier-free formula defining $S$. By 
the Tarski-Seidenberg principle
(more precisely \cite[Proposition 2.87]{BPRbook2}),  $\Ext (S, \R' )$ is well defined and does not depend on the choice of
the quantifier-free formula defining $S$.

It also uses the description of the real closure  $\R\langle \eps \rangle$ of the field $\R(\eps)$ of rational functions in the variable $\eps$ (equipped with the order $0_+$ defined by $0<\eps<r$ for any positive $r\in \R$,
so that $\eps$ is a positive infinitesimal), as the field of germs of continuous semi-algebraic functions 
to the right of the origin, 
where $\eps$ is the germ of the identity function
\cite[Proposition 3.13]{BPRbook2}. 

We are now ready for the sketch of the proof. Since $x\in \bar{S}$, there exists $r>0$ such that
the sphere of center $x$ and radius $t$ intersects $S$ for any $t\in (0,r)$. Hence,  
by Tarski-Seidenberg principle 
\cite[Theorem 2.80]{BPRbook2}, the sphere of center $x$ and radius $\eps$ intersects $\Ext(S,\R\langle \eps \rangle)$ in a point $y_\eps$, which is infinitesimally close to $x$,
since $\|y_\eps-x\|=\eps$ is smaller than any positive element of $\R$. 
The $k$ coordinates of $y_\eps$ 
are germs of semi-algebraic continuous functions, 
so
there exists $t_0>0$ and a $k$-tuple of semi-algebraic continuous functions $\varphi$ defined on $[0,t_0)$ such that $\varphi(0)=x$ and $\varphi(t)\in S$  for $t\in(0,t_0)$. Note that, denoting $\varphi=\Ext(\varphi,\R\la \eps\ra)$, $\varphi(\eps)=y_\eps$.
For more details 
see \cite[Theorem 3.19]{BPRbook2}.

The proof of our quantitative statement  is following the same strategy, extracting explicit bounds on the description of the path 
from the use of
algorithms from \cite{BPRbook2,BPRbookonline}.

Before stating and proving our main result we need to introduce a few notions.

\begin{definition}
  Let $\mathcal{P}$ be a set of $s$ polynomials of
  total
  degree bounded by $d$ in
  $k$ variables with coefficients in an ordered domain $\D$ with fraction field $\K$ contained in a real closed field $\R$. The set $S \subset \R^k$ is a $\mathcal{P}$-semi-algebraic set
  if there exists a Boolean combination $\Phi$ of atoms of the form $P > 0$,
  $P < 0$, $P = 0$, 
  with $P \in \mathcal{P}$
  such that
  \[ S = \{ x \in \R^k \mid \Phi (x) \} . \]
\end{definition}

In order to give quantitative results about the description of a
semi-algebraic path we use the following definitions, making precise how we describe points and paths.
We use Rational Univariate Representations \cite{fab} as well as Thom encodings \cite{CR}, with the notation and terminology from \cite{BPRbook2}.

\begin{definition}
We say that the point $x\in \R^k$ is associated to 
the real $k$-univariate representation
\cite[page 465]{BPRbook2}
$$\xi = \left((f_\xi,g_{\xi,0},\ldots,g_{\xi,k}),\tau_\xi\right),$$ 
where $f_\xi,g_{\xi,i} \in \D[A]$, $f_\xi$ monic, $\deg_A(g_{\xi,i})<\deg_A(f_\xi)$,
and $\tau_\xi$ is a Thom encoding \cite[Definition 2.29]{BPRbook2} of a real root $a_\xi$ of $f_\xi$
 if
 $g_{\xi,0}(a_\xi)\not=0$ and $$x_i=\frac{g_{\xi,i}(a_\xi)}{g_{\xi,0}(a_\xi)}, 1 \leq i \leq k.$$
The degree $d'$ of $\xi$ 
 is the degree of $f_\xi$ with respect to $A$.

We denote by $\D_\xi$ the ring $\D[A]/(f_\xi)$, and define
the evaluation homomorphism  
$\eval_\xi: \D_\xi \rightarrow \R$ 
by
\[
\eval_\xi(\bar{g})=g(a_\xi),
\]
and the sign function
$\sign_\xi: \D_\xi \rightarrow \{-1,0,1\}$  by
\[
\sign_\xi(\bar{g})=\sign(g(a_\xi)), 
\]
denoting by $\bar{g}$ the image of $g \in \D[A]$ in $D_\xi$ under the canonical surjection.

Note that if $g\in (f_\xi)$, 
$g(a_\xi) = 0$ and  $\sign(g(a_\xi)) = 0$.
As a consequence 
$\eval_\xi$ and 
$\sign_\xi$ are well defined.
  
However, note that $\D_\xi$ is not always a domain, since $f_\xi$ is not necessarily irreducible or even reduced.
Consequently, the homomorphism $\eval_\xi$ is not necessarily injective, since
an element $g \in \D[A]$ such that  $g(a_\xi)=0$ does not necessarily belong to the ideal $(f_\xi)$, so that it may happen that $\bar{g} \neq 0$ in $\D_\xi$, but $\eval_\xi(\bar{g}) = 0$.
Also, note that the function $\sign_\xi$ may not define a total order on $\D_\xi$, since there may
 exist
  elements $g \in \D[A]$ such that $\bar{g}\neq 0$ and $\sign_\xi(g)=0$.

 In the special case where the coordinates of $x$ are $x_i=(a_i/b)$, belonging to $\K$, with $(b,a_1,\ldots,a_k)\in \D^{k+1}$, $x$ is associated to the  real
 univariate representation 
 $$\xi=((A,A-b,A-a_1,\ldots,A-a_k),1).$$ 
It is of degree $1$, and $\D_\xi=\D$.
\end{definition}

\begin{definition}
  A \emph{description of a semi-algebraic path} $\varphi$ from $[0, t_0)$ to $\R^k$, with $\varphi(0) = x$, associated to $\xi$, is given by
  $$v = \left((f_v, g_{v,0},\ldots,g_{v,k}),\tau_v\right),$$
with $f_v,g_{v,i} \in \D_\xi[T,U]$, and $\tau_v$ a Thom encoding of a real root $u(t)$ of 
$\eval_\xi(f_v(t,u(t)))$
for all small enough $0 < t \le t_0$,
such that $\varphi$ from $[0, t_0)$ to $\R^k$ defined by
$$\begin{array}{rcl}
 \varphi(0)&= &x,\\
 \varphi(t)&= &\left(\frac{\eval_\xi(g_1(t,u(t)))}{\eval_\xi(g_0(t,u(t)))},\ldots,\frac{\eval_\xi(g_k(t,u(t)))}{\eval_\xi(g_0(t,u(t)))}\right)
 \end{array}$$ 
 is continuous at $0$.

The degree of the description $v$ is the pair $(d_{v,T},d_{v,U})$, where
$d_{v,T}$  is the maximum of the degrees  $$\deg_T(f_v), \deg_T(g_{v,0}),\ldots, \deg_T(g_{v,k})$$ and $d_{v,U}$ is the maximum of the degrees 
$$\deg_U(f_v), \deg_U(g_{v,0}),\ldots, \deg_U(g_{v,k}).$$
\end{definition}

The main result of the paper is the following.

\begin{theorem}[Quantitative Curve Selection Lemma]
\label{qcsl}
  Let 
   $\mathcal{P} \subset \D[X_1,\ldots,X_k]$ be a finite set of $s$ polynomials of maximum 
   total
   degree $d$,
  $S$ a $\mathcal{P}$-semi-algebraic set,  and
   $x \in \bar{S}$ 
  associated to
  $\xi$ of degree $d'$. There 
  exists $t_0\in \R, t_0 > 0$,
  a semi-algebraic path $\varphi: [0,t_0) \rightarrow \R^k$ such that,  
  $\varphi(0) = x$,
  and $\varphi ((0,
  t_0)) \subset S$ 
with a description $v$ of degree
    \[
    (2 d N N'^2,N) \in (O(d)^{3k+3},O(d)^{k})
    \] 
    with
    \begin{eqnarray}
    \label{eqn:NN'}
    \nonumber
     N &=& (2d +6)  (2d +5)^{k-1}, \\
     N' &=& 5 (k(2d+4)+2) N.
    \end{eqnarray}
  Moreover the description $v$ can be computed with $d'^{O(1)} s^k d^{O(k)}$ arithmetic operations in $\D$.
\end{theorem}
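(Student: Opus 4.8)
The plan is to make effective each step of the sketch of Theorem~\ref{csl} given above, tracking degrees along the way. First I would reduce to the following effective substitute for the topological input: since $x\in\bar S$, for all small $t>0$ the sphere $\Sigma_t$ of center $x$ and radius $t$ meets $S$; the effective version is that in $\R\la\eps\ra$ the sphere $\Sigma_\eps$ meets $\Ext(S,\R\la\eps\ra)$ in a point $y_\eps$ infinitesimally close to $x$. The point $y_\eps$ is algebraic over $\D_\xi(\eps)$ and I need a real univariate representation of it, with controlled degree in $\eps$ (this becomes $d_{v,T}$) and in the new algebraic variable $U$ (this becomes $d_{v,U}$). To produce such a representation I would apply the quantitative real univariate representation / sampling algorithms of \cite{BPRbook2} to the system consisting of the polynomials $\mathcal P$ (specialized via $\xi$, so now over $\D_\xi$) together with the sphere equation $\sum_i(X_i-x_i)^2=\eps^2$; on each sign-condition stratum one gets a point described by a univariate representation whose degree is $O(d)^k$ in the $k$ variables $X_1,\dots,X_k$, hence the quantity $N=(2d+6)(2d+5)^{k-1}$. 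One of these strata contains $y_\eps$.

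The next step is to convert a univariate representation over $\R\la\eps\ra$ into a description of a semi-algebraic path over $\R$. Concretely, the representation of $y_\eps$ has the shape $(f(\eps,U),g_0(\eps,U),\dots,g_k(\eps,U))$ together with a Thom encoding of the relevant root $u(\eps)$; replacing $\eps$ by the variable $T$ gives exactly a candidate description $v=((f_v,g_{v,0},\dots,g_{v,k}),\tau_v)$ in $\D_\xi[T,U]$, and the map $\varphi(t)=(g_{v,1}(a_\xi,t,u(t))/g_{v,0}(\dots),\dots)$ is the desired path. Two things must be checked: (i) continuity at $0$, i.e. that $\varphi(t)\to x$ as $t\to 0^+$, which holds because $y_\eps$ is infinitesimally close to $x$ and the order $0_+$ on $\R\la\eps\ra$ is precisely the germ-at-$0^+$ order, so $g_{v,0}(a_\xi,0,u(0))\ne 0$ and the limit is $x$; and (ii) that for all small $t>0$ the point $\varphi(t)$ satisfies the Boolean formula $\Phi$ defining $S$ — this follows from the Tarski--Seidenberg transfer principle \cite[Theorem~2.98]{BPRbook2} applied to the first-order statement "for $\eps$ small, $y_\eps\in\Ext(S,\cdot)$", reinterpreted over germs. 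The degree in $U$ of $v$ is $N$ as above. The degree in $T$ requires one more amplification: the polynomials output by the univariate representation algorithm have $\eps$-degree governed by the number of variables and the degrees, and careful bookkeeping — including the cost of clearing denominators coming from $\D_\xi$ and from the specialization at $a_\xi$, and an extra resultant-type elimination to present $f_v$ with coefficients in $\D_\xi$ rather than in a further extension — yields the bound $2dNN'^2$ with $N'=5(k(2d+4)+2)N$; this is $O(d)^{3k+3}$.

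Finally, the complexity count: each algorithm invoked (computing $\Ext$, the sign-determination / real univariate representation procedures, the Thom encodings, and the eliminations) has complexity polynomial in $d'$ (the degree of $\xi$), and $s^k d^{O(k)}$ in the remaining data, since we run a sampling algorithm on $s+1$ polynomials in $k$ variables of degree $O(d)$ over the ring $\D_\xi$; multiplying through gives $d'^{O(1)} s^k d^{O(k)}$ arithmetic operations in $\D$.

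The main obstacle I anticipate is step two's degree bookkeeping in the $T$-variable: the univariate representation of $y_\eps$ naturally lives over an extension of $\D_\xi(\eps)$, and rewriting it as a genuine element of $\D_\xi[T,U]$ with the stated bound $(2dNN'^2, N)$ — rather than a cruder bound like $(O(d)^{O(k)},O(d)^{O(k)})$ — demands a precise choice of which elimination to perform and in which order, together with sharp estimates (of Bézout type, not worst-case Sylvester type) on the degrees of the intermediate resultants and subresultants. Getting $N'$ with the explicit constant $5(k(2d+4)+2)$ rather than merely $O(d)^{k+1}$ is where the real work, and the improvement over \cite{Kurdyka2014}, resides.
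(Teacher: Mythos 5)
Your high-level plan is the same as the paper's: push to $\R\la\eps\ra$, find a sample point $y_\eps$ on the sphere $S(x,\eps)$ inside the right sign condition of $\mathcal P$, and then read off a path description by interpreting the representation of $y_\eps$ as functions of the parameter. But there is a genuine gap, concentrated exactly where you flag the "real work": the mechanism by which the representation over the (multiply) infinitesimal extension is converted into a description $v$ over $\D_\xi[T,U]$.

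First, "applying the sampling algorithm to $\mathcal P$ together with the sphere equation" is not enough as stated, because you need to sample in strict-inequality sign conditions, and because you need to bound the number of equations defining the algebraic set you sample on by $k$ (otherwise the Bézout-type bound giving $N=(2d+6)(2d+5)^{k-1}$ does not apply). The paper does this by introducing \emph{two further} infinitesimals $\delta,\gamma$ and replacing each $P_i$ by the four perturbed polynomials $(1-\delta)P_i\pm\delta H_k(d',i)$, $(1-\delta)P_i\pm\delta\gamma H_k(d',i)$. Proposition~\ref{prop:Hk} then guarantees both that no more than $k$ of these can vanish simultaneously and that each connected component of each sign condition on the sphere captures the $\gamma\to 0$ limit of a component of a bounded algebraic set of the controlled form $\ZZ(\{P_0,Q_{i_1},\ldots,Q_{i_j}\},\cdot)$ with $j\le k$. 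Without this deformation step there is no way to get the stated $N$.

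Second — and this is the bigger omission — the representation produced by the sampling algorithm lives over $\D_\xi[\eps,\delta][U]$ (after $\lim_\gamma$), not over $\D_\xi[\eps][U]$, so "replacing $\eps$ by $T$" does not directly yield a description. The paper must first eliminate $\delta$: it computes restricted eliminants $\RElim_U$, uses the Cauchy bound to select a specific $\delta=c(h(\eps))$ small enough that no eliminant changes sign along the segment from $(\eps,\delta)$ to $(\eps,c(h(\eps)))$, substitutes it, and clears denominators. This substitution is precisely what produces the $N'^2$ factor in the degree $2dNN'^2$ in $T$, and a second round of restricted elimination (in $\delta$) plus Cauchy bound is what produces an admissible $t_0$ with the claimed numerator/denominator in $\D_\xi$. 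Your proposal's "extra resultant-type elimination" gestures toward this but does not identify which eliminations are needed, in which variables, and why the Thom encodings and signs remain stable under the two substitutions $\delta\mapsto c(h(\eps))$ and $\eps\mapsto T\in(0,t_0)$. These stability arguments are the content of the correctness proof of Algorithm~\ref{alg:1}, and without them the claim that $\varphi((0,t_0))\subset S$ is unsupported.

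Finally, note also that the paper proves Theorem~\ref{qcsl} by first establishing a stronger statement (Theorem~\ref{qcsltotcc}), which produces paths reaching \emph{every} semi-algebraically connected component of each sign condition near $x$, and then specializes; your proposal aims directly at a single sign condition, which is adequate for the statement but loses the stronger result that the paper uses in the last section to bound the degree of the Zariski closure of the path.
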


We prove Theorem \ref{qcsl} as a corollary of a  more general result.

We denote by $S(x,t)$ and $B(x,t)$  the sphere and the ball  (respectively)  in $\R^k$ with center $x\in \R^k$ 
and radius $t\in \R$. We also denote by $S(x,\eps)$ and $B(x,\eps)$ 
the sphere and the ball  (respectively)  in $\R\la\eps\ra^k$ with center $x\in \R\la \eps\ra^k$ 
and radius $\eps$.

For any finite family of polynomials $\mathcal{P} \subset \R[X_1,\ldots,X_k]$, a sign condition on $\mathcal{P}$ is an element
$\{0,1,-1\}^{\mathcal{P}}$.  If $Z \subset \R^k$ is an any semi-algebraic set and  $\sigma \in \{0,1,-1\}^{\mathcal{P}}$ a sign condition
on $\mathcal{P}$,  we denote by 
\[
\RR(\sigma,Z) = \{ x\in Z \;\mid\; \sign(P(x)) = \sigma(P), P \in \mathcal{P}\},
\]
and call $\RR(\sigma,Z)$ the realization of $\sigma$ on $Z$. A sign condition $\sigma$ is called \emph{realizable} if 
$\RR(\sigma,\R^k) \neq \emptyset$.  

\begin{notation}
\label{not:sign_n}
We denote by 
$\SIGN_n (x,\mathcal{P} )$ the set of realizable sign conditions 
of $\mathcal{P}$ at the neighbourhood of $x$, i.e.
the set of sign conditions $\sigma \in \{0,1,-1\}^{\mathcal{P}}$ such that
there exists $r$ a positive element of $\R$ such that for every $t \in \R$, such that $0<t<r$
$$\RR(\sigma,\R^k)\cap S(x,t)\not= \emptyset,$$
or, equivalently, by \cite[Theorem 2.98 and Proposition 3.20]{BPRbook2},
$$\RR(\sigma,\R\langle \eps \rangle^k)\cap S(x,\eps)\not= \emptyset.$$
\end{notation}

\begin{theorem}[Construction of little paths]
\label{qcsltotcc}
  Let $\mathcal{P} \subset \R[X_1,\ldots,X_k]$ be a
  finite set of $s$ polynomials of maximum 
  total
  degree $d$,  and  $x \in \R^k$ associated to $\xi$ of degree $d'$.
   Then, there exist 
    $(a_0,b_0) \in \D_\xi^2, \sign_\xi(a_0)=\sign_\xi(b_0)=1$, and
for every  
$\sigma\in \SIGN_n( x,\mathcal{P} )$,  
 a set $\Phi_\sigma$ of 
non-constant
semi-algebraic paths \[
\varphi: \left[0,t_0=\frac{\eval_\xi(a_0)}{\eval_\xi(b_0)}\right) \rightarrow \R^k
\]
such that $\varphi(0) = x$, and
 $\varphi((0,t_0))\subset \RR(\sigma,B(x,t_0))$.
  The set of $\varphi(\eps), \varphi \in \Phi_\sigma $ has a non-empty intersection
 with  every semi-algebraically connected component of  
 \[
 \RR(\sigma,\R\langle \eps \rangle^k)\cap S(x,\eps).
 \]
 The set, 
 $\mathcal{W}_\sigma$, 
   of descriptions of the elements of $\Phi_\sigma$
are  of degrees bounded
  \[(2 d N N'^2,N) \in (O(d)^{3k+3},O(d)^{k}),\] with
     \[ N= (2d +6)  (2d +5) ^{k-1} , \]
       \[N'= 5 (k(2d+4)+2) N.
       \] 
  When $\D=\Z$ and $x=0$, 
  and the bitsizes of the coefficients of the polynomials in $\mathcal{P}$ are bounded by $\tau$, 
  the bitsizes of the coefficients  appearing in the descriptions $\mathcal{W}_{\sigma}$,  and the numerator and denominator of $t_0$  
have bitsizes bounded by   $ \tau (d+k)^k O(d)^{3k+2}$.

Moreover,  the sets  of descriptions $\mathcal{W}_{\sigma}, \sigma\in \SIGN_n( x,\mathcal{P})$, can be computed with 
$d'^{O(1)}s^{k+1}  d^{O(k)}$ 
arithmetic operations in $\D$.
\end{theorem}

\section{Proving theorem \ref{qcsltotcc}}

Our strategy is to find points at infinitesimal distance from $x$ inside realizable sign conditions on $\mathcal P$ with good degree bounds on their definition and to use these points with coordinates in $\R\langle \eps \rangle$, the real closed field of germ of semi-algebraic functions,  to obtain the description of semi-algebraic paths.
The algorithm is very similar to \cite[Algorithm 13.1 (Computing realizable sign conditions)]{BPRbook2} except in the last part.

\begin{notation}\label{13:not:P0}
Define 
        $$P_0= (g_{\xi,0}X_{1}-g_{\xi,1})^{2} + \cdots +(g_{\xi,0} X_{k}-g_{\xi,k})^{2} -\eps^2,$$
        where $x$ is associated to
        $$\xi = \left((f_\xi,g_{\xi,0},\ldots,g_{\xi,k}),\tau_\xi\right),$$  
        and note that
        $$\ZZ(P_0,\R\la \eps\ra^k) = S(x,\eps).$$
    \end{notation}

\begin{notation}\label{13:not:bigh}
Let $\mathcal{P}=\{P_1,\ldots,P_s\} \subset \R[X_1,\ldots,X_k]$ 
  of maximum  
  total
  degree $d$.
Define $\hat d$ to be the smallest even number $>d$.
  Define
  \begin{eqnarray*}
       H_{k} (\hat d,i)& =&1+ \sum_{1 \leq j \leq k} i^{j} X_{j}^{\hat d} ,\\
       H_{k}^{h} (\hat d,i) &=&X_{0}^{\hat d} + \sum_{1 \leq j \leq k} i^{j} X_{j}^{\hat d} .
     \end{eqnarray*}
  Note that 
  $H_{k} (\hat d,i) (x) >0$ for every $x \in \R^{k}$. 
  
  Let $\delta$ and $\gamma$ be two new variables.
  
Define for $1 \leq i \leq s$,
    \begin{eqnarray*}
      P^{\star}_{i} & = & \{ (1- \delta ) P_{i} + \delta H_{k} (\hat d ,i) , (1-
      \delta ) P_{i} - \delta H_{k} (\hat d ,i) ,\\
      &  & (1- \delta ) P_{i} + \delta \gamma H_{k} (\hat d ,i) , (1- \delta )
      P_{i} - \delta \gamma H_{k} (\hat d ,i) \}. 
    \end{eqnarray*}
    \end{notation}

We also need  the following notation.

\begin{notation}
 [Limit of a Puiseux series]
  \label{12:not:lim}
  For any real closed field $\R$,
  the real closure  $\R\langle \eps \rangle$ of the field $\R( \eps)$ equipped with the order defined by $0<\eps<r$ for any positive $r\in \R$, can be described as the field of algebraic Puiseux series in $\eps$. The $\lim_\eps$ function associates to an element $y_\eps$ of  $\R\langle \eps \rangle$ which is bounded over $\R$ its limit $\lim_\eps(y_\eps)\in \R$ defined by substituting $0$ to $\eps$ in the Puiseux series $y_\eps$.
\end{notation}

  We finally denote $\R\la\eps,\delta\ra  = \R\la\eps\ra\la\delta\ra$, $\R\la\delta,\gamma\ra  = \R\la\delta\ra\la\gamma\ra$ and $\R\la\eps,\delta,\gamma\ra = \R\la\eps,\delta\ra\la\gamma\ra$. As a consequence, $\delta$ is an infinitesimal smaller than any positive elements of $\R\la\eps\ra$ and $\gamma$ is an infinitesimal smaller than any positive elements of 
  $\R\la\eps,\delta\ra$.

\begin{proposition}\label{prop:Hk}
\begin{enumerate}[(a)]
\item
\label{itemlabel:prop:Hk:1}
For every $i_1<\ldots <i_{k+1}$ and $Q_{i_j}\in  P^{\star}_{i_j}$
$$\ZZ(\{Q_{i_1},\ldots,Q_{i_{k+1}}\},\R\langle \delta,\gamma \rangle^k)=\emptyset.$$
\item 
\label{itemlabel:prop:Hk:2}
For every semi-algebraically connected component $C\subset S(x,\eps)$ of the realization of a sign condition on $\mathcal{P}$ restricted to $S(x,\eps)$, there exist $\ell \le k$, $i_1<\ldots <i_{\ell}$, $Q_{i_j}\in  P^{\star}_{i_j}$
and a semi-algebraically connected component $D$ of 
\[
\ZZ(\{P_0,Q_{i_1},\ldots,Q_{i_{k+1}}\},\R\langle \eps,\delta,\gamma \rangle^k)
\] 
such that $\lim_\gamma(D)\subset 
\Ext(C,\R\la\eps,\delta\ra)
$.
\end{enumerate}
\end{proposition}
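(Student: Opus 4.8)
The plan is to read Proposition~\ref{prop:Hk} as the geometric engine behind the adaptation, to the sphere $S(x,\eps)=\ZZ(P_0)$, of the sampling procedure of \cite[Algorithm 13.1]{BPRbook2}. Part~(a) asserts that the perturbed family $\bigcup_i P^\star_i$ is in general position (no $k+1$ of its members share a zero), and part~(b) asserts that every cell of the arrangement of $\mathcal P$ on $S(x,\eps)$ is, after the double perturbation by $\delta$ and then $\gamma$, detected by a semi-algebraically connected component of an $\le k$-fold complete intersection of members of the $P^\star_i$ with $\ZZ(P_0)$. I would prove (a) by homogenization plus a Vandermonde computation in the $i_j$'s, and (b) by the two-stage perturbation argument underlying the correctness of that algorithm, checking along the way that carrying the extra infinitesimal $\eps$ and replacing $\D$ by $\D_\xi$ does not touch the combinatorial core.

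For part~(a) I would write each $Q_{i_m}=(1-\delta)P_{i_m}+\delta\,\sigma_m\gamma^{a_m}H_k(d',i_m)$ with $\sigma_m\in\{1,-1\}$ and $a_m\in\{0,1\}$, and homogenize to degree $d'$ with a variable $X_0$. Using that $d'$ is even and strictly larger than $\deg P$ for every $P\in\mathcal P$, the variable $X_0$ divides the homogenization of each $P_{i_m}$, so $Q_{i_m}^h$ restricted to $\{X_0=0\}$ equals $\delta\sigma_m\gamma^{a_m}\sum_{j=1}^k i_m^j X_j^{d'}$. After the substitution $w_j=X_j^{d'}$, a common zero at infinity of any $k$ of the $Q_{i_m}$ would solve a homogeneous linear system whose $k\times k$ matrix is $\mathrm{diag}(i_m)$ times a Vandermonde matrix in the distinct nonzero nodes $i_m$, hence invertible; so $w=0$, $X=0$, and there is no common zero at infinity of any $k$ of them, a fortiori none of all $k+1$. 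In particular $\ZZ(\{Q_{i_1},\dots,Q_{i_k}\})$ is a finite subset of affine space for any $k$ of them, and to rule out a common affine zero $z$ of all $k+1$ I would use the invertible $(k+1)\times(k+1)$ Vandermonde identity $V\cdot(1,z_1^{d'},\dots,z_k^{d'})^{t}=\bigl(-\tfrac{1-\delta}{\delta\sigma_m\gamma^{a_m}}P_{i_m}(z)\bigr)_m$, which merely records $H_k(d',i_m)(z)=-\tfrac{1-\delta}{\delta\sigma_m\gamma^{a_m}}P_{i_m}(z)$: reading off its first coordinate equates the prescribed infinitesimal $\tfrac{\delta}{1-\delta}$ with a fixed $\R$-linear combination of the $\gamma^{-a_m}P_{i_m}(z)$, which together with the constraints $z_j^{d'}\ge 0$ and the relative orders of $\delta,\gamma$ is impossible, as in \cite{BPRbook2}.

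For part~(b) fix a sign condition $\sigma$ on $\mathcal P$ and a semi-algebraically connected component $C$ of $\RR(\sigma,S(x,\eps))$; since $S(x,\eps)$ is a sphere, $\overline C$ is closed and bounded. First I would perturb by $\delta$: over $\R\langle\eps,\delta\rangle$ replace $\sigma$ by the closed sign condition $\sigma^\delta$ on $\{(1-\delta)P_i\pm\delta H_k(d',i)\}_i$ obtained by shrinking each strict inequality of $\sigma$ and thickening each equality by $\delta H_k(d',i)$ (legitimate since $H_k(d',i)>0$), and check, as in \cite{BPRbook2}, that $\RR(\sigma^\delta,S(x,\eps))$ is nonempty, bounded, $\delta$-infinitesimally close to $\Ext(C,\R\langle\eps,\delta\rangle)$, and meets every component of the latter. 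Then I would pick a point $y$ of $\RR(\sigma^\delta,S(x,\eps))$ meeting as many of the hypersurfaces $(1-\delta)P_i\pm\delta H_k(d',i)$ as possible; by part~(a) at most $k$ of them contain $y$, selecting indices $i_1<\dots<i_\ell$ with $\ell\le k$, which may be taken among the equality indices of $\sigma$. Finally I would perturb by $\gamma$: replace each $\delta$-perturbed polynomial vanishing at $y$ by the corresponding member $(1-\delta)P_{i_j}\pm\delta\gamma H_k(d',i_j)$ of $P^\star_{i_j}$, which has the same $\gamma\to0$ limit $\ZZ(P_{i_j})$ but, through the second infinitesimal, makes $\ZZ(\{P_0,Q_{i_1},\dots,Q_{i_\ell}\},\R\langle\eps,\delta,\gamma\rangle^k)$ transverse near $y$; let $D$ be its component through the point $\gamma$-infinitesimally close to $y$. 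Then $\lim_\gamma(D)\subseteq\Ext(C,\R\langle\eps,\delta\rangle)$, since $\lim_\gamma(D)\subseteq\ZZ(\{P_0,P_{i_1},\dots,P_{i_\ell}\})$, the strict inequalities of $\sigma$ at the remaining indices survive the limit (the corresponding $P_i$ do not vanish near $y$), and the limit stays in the component $C$. Letting $\sigma$ and $C$ vary produces all the $\Phi_\sigma$.

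The step I expect to be the main obstacle is the last one: arranging the two perturbations so that $\lim_\gamma(D)$ lands precisely inside the open cell $\Ext(C,\R\langle\eps,\delta\rangle)$ rather than merely in its closure or in an adjacent cell. This requires choosing $y$ where the $\le k$ active perturbed hypersurfaces, together with $P_0$, locally cut out a smooth set whose component through $y$ stays in one component of the $\sigma$-cell, and then verifying that the further $\gamma$-perturbation preserves this containment against every hypersurface of the arrangement at once — the technical heart of the correctness proof of \cite[Algorithm 13.1]{BPRbook2}. The rest is routine bookkeeping: checking that inserting $P_0$ with its infinitesimal radius $\eps$, and replacing $\D$ by the coefficient ring $\D_\xi$, leaves that argument (and the resulting degree estimates) intact.
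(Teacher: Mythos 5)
Your part (a) is handled by the paper simply by citing \cite[Proposition 13.6]{BPRbook2}; your homogenization-plus-Vandermonde sketch is in the spirit of that proof, though the final step (deriving a contradiction from the affine Vandermonde identity using the relative orders of $\delta$ and $\gamma$) is waved at rather than carried out, and would need the careful bookkeeping from BPR to close.

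For part (b), your route is genuinely different from the paper's, and it has a real gap exactly where you flag one. The paper does not pick a maximal-vanishing point $y$ and argue transversality near it. Instead it proceeds as follows: given the cell $C$, one shows directly that $\Ext(C,\R\langle\eps,\delta,\gamma\rangle)$ is contained in a single \emph{basic closed} set $T$ (the one with $\delta\gamma$-thickened equalities and $\delta$-shrunk strict inequalities, together with $P_0=0$) by picking any path $\theta$ in $C$, using compactness of its image to bound $\min_{i>\ell}P_i$ below by a positive constant of $\R\langle\eps\rangle$, and observing that the infinitesimal shrink/thicken therefore does not cut the extended path. Having placed $\Ext(C,\cdots)$ inside one connected component $D'$ of $T$, the paper then invokes Proposition~\ref{13:pro:algebraic fill} (i.e.\ \cite[Proposition 13.1]{BPRbook2}): \emph{every} connected component of a basic closed semi-algebraic set contains a connected component of the algebraic set obtained by turning some of its inequalities into equalities. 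This is the step that produces the algebraic set $\ZZ(\{P_0,Q_{i_1},\dots\})$ and the component $D\subset D'$ \emph{without any local smoothness or transversality analysis}, and part (a) then caps the number of active $Q_{i_j}$ at $k$. Your proposal replaces this global lemma by ``pick $y$ where the active perturbed hypersurfaces cut out a smooth set locally, let $D$ be its component through a nearby point,'' and you yourself identify that establishing the existence of such a $y$, the smoothness, and the containment $\lim_\gamma(D)\subset\Ext(C,\R\langle\eps,\delta\rangle)$ is ``the technical heart'' — but you do not supply it. Without Proposition~\ref{13:pro:algebraic fill} (or a substitute for it), the proof is incomplete: nothing guarantees that such a transverse point exists in the relevant component, nor that the resulting component $D$ is entirely contained in the correct $\sigma^\delta$-cell rather than escaping into an adjacent one. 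The cleanest fix is simply to adopt the paper's strategy: establish the containment $\Ext(C,\cdots)\subset T$ via a path and compactness, and then apply Proposition~\ref{13:pro:algebraic fill} to obtain $D$, after which the containment $\lim_\gamma(D)\subset\Ext(C,\R\langle\eps,\delta\rangle)$ follows because $D'$ is connected, $\lim_\gamma D'$ is a connected subset of the realization of $\sigma$ on $S(x,\eps)$ over $\R\langle\eps,\delta\rangle$, and it meets the component $\Ext(C,\R\langle\eps,\delta\rangle)$.
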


The proof of Proposition \ref{prop:Hk} uses the following result that appears in \cite[Proposition 13.1]{BPRbook2}.
\begin{proposition}
  \label{13:pro:algebraic fill}Let $D \subset \R^{k}$ be a non-empty
  semi-algebraically connected component of a basic closed semi-algebraic set
  defined by
  \[ P_{1} = \cdots =P_{\ell} =0,P_{\ell +1} \geq 0, \cdots ,P_{s} \geq 0. \]
  There exists  $\{i_{1} , \ldots ,i_{m} \} \subset \{\ell +1, \ldots ,s\}$
  such that the algebraic set $W$ defined by equations
  \[ P_{1} = \cdots =P_{\ell} =P_{i_{1}} = \cdots P_{i_{m}} =0, \]
  has
  a semi-algebraically connected component $D'$ 
   contained in $D$.
\end{proposition}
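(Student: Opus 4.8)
The plan is to exhibit the algebraic set $W$ explicitly, choosing a point of $D$ at which as many of the inequality constraints as possible are active and then taking the connected component through that point of the corresponding intersection; no induction on the number $s-\ell$ of inequalities will be needed. Write $\mathcal{S} = \{x \in \R^k \mid P_1(x) = \cdots = P_\ell(x) = 0,\ P_{\ell+1}(x) \ge 0, \dots, P_s(x) \ge 0\}$, so that $D$ is a semi-algebraically connected component of the \emph{closed} semi-algebraic set $\mathcal{S}$. The first thing I would record is that, since $\mathcal{S}$ has only finitely many semi-algebraically connected components and each of them is closed, each component is also open in $\mathcal{S}$; in particular $D$ is open in $\mathcal{S}$, so for every $y \in D$ there is some $r > 0$ with $B(y,r) \cap \mathcal{S} \subseteq D$.

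Next, for $x \in D$ let $J(x) = \{j \in \{\ell+1,\dots,s\} \mid P_j(x) = 0\}$ be its set of active inequality indices, and (using $D \ne \emptyset$) pick $x_0 \in D$ for which $J := J(x_0)$ is maximal, with respect to inclusion, among the finitely many subsets $\{J(x) \mid x \in D\}$. Set
\[
 W = \{x \in \R^k \mid P_1(x) = \cdots = P_\ell(x) = 0 \text{ and } P_j(x) = 0 \text{ for all } j \in J\},
\]
which is an algebraic set of exactly the required shape, with $\{i_1,\dots,i_m\} = J \subseteq \{\ell+1,\dots,s\}$, and let $D'$ be the semi-algebraically connected component of $W$ that contains $x_0$. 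I claim this $D'$ works.

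To prove $D' \subseteq D$, I would show that $D' \cap D$ is non-empty, closed and open in $D'$, so that connectedness of $D'$ forces $D' \cap D = D'$. Non-emptiness is witnessed by $x_0$, and closedness holds because $D$ is closed in $\R^k$. For openness, fix $y \in D' \cap D$. Since $y \in D' \subseteq W$ we have $P_j(y) = 0$ for every $j \in J$, hence $J \subseteq J(y)$; since $y \in D$, maximality of $J$ forces $J(y) = J$, i.e.\ $P_j(y) > 0$ for every inequality index $j \notin J$. By continuity of the finitely many polynomials $P_j$ there is $r > 0$ with $P_j > 0$ on $B(y,r)$ for all such $j$, and shrinking $r$ we may also arrange $B(y,r) \cap \mathcal{S} \subseteq D$. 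Then any $z \in B(y,r) \cap D'$ lies in $W$ — so $P_i(z) = 0$ for $i \le \ell$ and for $i \in J$ — and satisfies $P_j(z) > 0$ for the remaining inequality indices; hence $z \in \mathcal{S}$, and therefore $z \in B(y,r) \cap \mathcal{S} \subseteq D$. Thus $B(y,r) \cap D' \subseteq D$, which proves openness and hence the proposition.

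The one delicate point — the main obstacle — is getting the index set in the middle step exactly right. Taking the union of the active sets over all of $D$ would in general make $W$ empty; choosing a maximal active set \emph{along $D'$} instead of along $D$ would be circular, since $D' \subseteq D$ is precisely what is being proved; and it is the maximality \emph{for inclusion} that makes the equality $J(y) = J$ available at the key moment. Once this is arranged, the remainder uses only standard facts that hold over an arbitrary real closed field: finiteness of the number of semi-algebraically connected components, the fact that a connected component of a closed semi-algebraic set is open in it, and continuity of polynomial maps in the order topology.
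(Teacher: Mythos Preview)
The paper does not give its own proof of this proposition: it merely quotes it as \cite[Proposition 13.1]{BPRbook2} and uses it as a black box in the proof of Proposition~\ref{prop:Hk}. Your argument is correct and is essentially the standard proof (as in \cite{BPRbook2}): pick a point of $D$ with an inclusion-maximal active index set $J$, let $D'$ be the component of the corresponding algebraic set through that point, and use maximality of $J$ together with openness of $D$ in $\mathcal{S}$ to run the clopen argument showing $D'\subseteq D$.
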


\begin{proof}[Proof of Proposition \ref{prop:Hk}]
Part \eqref{itemlabel:prop:Hk:1} follows from Proposition 13.6 in \cite{BPRbook2}.

We now prove Part \eqref{itemlabel:prop:Hk:2}. 
Let (without loss of generality), $\sigma$ be the sign condition $P_1 = \cdots = P_\ell =0, P_{\ell+1} > 0, \cdots, P_s > 0$, and
$C\subset S(x,\eps)$, the realization of $\sigma$ restricted to $S(x,\eps)$.
Consider two points $x$ and $y$ in $C$. There is a semi-algebraic path $\theta$ from $x$ to $y$ inside
  $C$. Since the image of $\theta$ is closed and bounded, the semi-algebraic and continuous
  function $\min_{\ell +1 \le i \le s} (P_{i} )$ has a strictly positive
  minimum on the image of $\theta$. The extension of the path $\theta$ to $\R \la \eps,\delta,\gamma\ra$
  is thus entirely contained inside the subset $T$ 
  of $\R \la \eps, \delta,\gamma \ra^{k}$
  defined by
  $$\displaylines{
  P_0 = 0, \cr
  -\delta \gamma H_k(\hat d,1) \leq (1-\delta)P_{1} \leq 
 \delta \gamma H_k(\hat d,1),\cr
   \vdots \cr
  -\delta \gamma H_k(\hat d,\ell) \leq (1-\delta)P_{\ell} \leq 
 \delta \gamma H_k(\hat d,\ell),\cr
  (1-\delta)P_{\ell+1} - \delta H_k(\hat d,\ell+1) \geq 0,\cr
   \vdots \cr
   (1-\delta)P_{s} - \delta H_k(\hat d,s) \geq 0.
   }
   $$
   
  Thus, there is one non-empty semi-algebraically connected component
  $D'$ of $T$ containing  $\Ext(C,\R\la \eps, \delta,\gamma\ra)$.  Now applying Proposition \ref{13:pro:algebraic
  fill} to $D'$ and $T$, we get a semi-algebraically connected component
  $D$ of some algebraic set defined by
  $$\displaylines{
  P_0 = 0, \cr
   (1-\delta)P_{i_1} + s_{i_1} \delta \gamma
   H_k(\hat d,i_1) = 0, \cr
   \vdots 
   \cr
  (1-\delta)P_{i_m} + s_{i_m}  \delta \gamma
   H_k(\hat d,i_m) = 0, \cr
  (1-\delta)P_{j_1} - \delta  H_k(\hat d,j_1) = 0, \cr
  \vdots
  \cr
  (1-\delta)P_{j_n}  - \delta H_k(\hat d,j_n) =0,
  }
   $$
(with
$1\leq i_1 < \cdots < i_m \leq \ell, \ell+1 \leq j_1 < \cdots < j_n \leq s$ and  $s_{i_j}\in \{-1,1\}$)
contained in $D'$, and $\lim_\gamma D$ is contained in $\Ext(C,\R \la \eps,\delta\ra)$.
\end{proof}

Before describing the algorithm we need some more notation.
\begin{notation}
\label{not:homogenization}
Let $\A$ be a ring and 
\[
Q = \sum_{\alpha =(a_1,\ldots,\alpha_k) \in \N^k} a_\alpha X_1^{\alpha_1}\cdots X_k^{\alpha_k}   \in \A[X_1,\ldots,X_k].
\] 
We denote by $Q^h \in A[X_0,\ldots,X_k]$ (the homogenization of $Q$) the polynomial
\[
Q^h := \sum_\alpha X_0^{e - \sum_i \alpha_i} X_1^{\alpha_1} \cdots X_k^{\alpha_k}, 
\]
where $e$  is the least even number not less than the total degree of $\deg(Q)$.
\end{notation}

\begin{notation}
 [Substituting a $k$-univariate representation in a polynomial]
  \label{12:not:subsrur}
  Let 
  \[
  u= (f,g) \in \D[T]^{k+2} , g= (g_{0} , \ldots 
  ,g_{k})
  \] 
  be a $k$-univariate representation and $Q \in \D[X_{1} , \ldots, X_{k} ]$. 
  Set
  \label{12:not:subsrur2}
  \begin{equation}
    \label{12:eq:P} Q_{u} =
    Q^h(g_0,g_1,\ldots,g_k).
  \end{equation}
\end{notation}

We will use the following definitions in Algorithm \ref{alg:1} (Computing Local Semi-algebraic Paths).

\begin{definition}
\label{def:xi}
Given a point $x$ associated to $\xi$ with coefficients in $\D$:
\begin{enumerate}[(a)]
\item  
\label{itemlabel:def:xi:a}
the $\eval_\xi$ homomorphism from $\D_\xi \mapsto \R$ is extended to 
$\D_\xi[\eps] \mapsto \R\la \eps\ra$  (resp.  $\D_\xi[\eps,\delta] \mapsto \R\la \eps, \delta\ra$, $\D_\xi[\eps,\delta,\gamma] \mapsto \R\la \eps, \delta,\gamma\ra$);
\item 
\label{itemlabel:def:xi:b}
the $\sign_\xi$ function from $\D_\xi \mapsto \{-1,0,1\}$ is extended to $\D_\xi[\eps]\mapsto \{-1,0,1\}$ (resp.  $\D_\xi[\eps,\delta] \mapsto \{-1,0,1\}$, $\D_\xi[\eps,\delta,\gamma]\mapsto \{-1,0,1\}$) using the fact that $\eps$ (resp $\delta$, $\gamma$) is smaller than any positive element in $\R$  (resp. $\R\la \eps \ra$,  $\R\la \eps,\delta \ra$);
\item 
\label{itemlabel:def:xi:c}
$\lim_\gamma: \D_\xi[\eps,\delta,\gamma] \mapsto \D_\xi[\eps,\delta]$ is defined by removing
extraneous
 powers of $\gamma$ and replacing $\gamma$ by $0$;
\item 
\label{itemlabel:def:xi:d}
the minimum $\min(a,b)$ of two elements in  $\D_\xi$ (resp. $\D_\xi[\eps]$, $\D_\xi[\eps,\delta]$, $\D_\xi[\eps,\delta,\gamma]$) is equal to $b$ if $\sign_\xi(b-a)=1$ and $a$ otherwise.
Similarly,
the maximum $\max(a,b)$ of two elements in  $\D_\xi$ (resp. $\D_\xi[\eps]$, $\D_\xi[\eps,\delta]$, $\D_\xi[\eps,\delta,\gamma]$) is equal to $b$ if $\sign_\xi(a-b)=1$ and $a$ otherwise.
\end{enumerate}
\end{definition}

\begin{remark}[About the sign determination algorithm]
\label{rem:xi}
In Algorithm \ref{alg:1} we will use  \cite[Algorithm 10.13
    (Univariate Sign Determination)]{BPRbook2}. The input to this algorithm 
    requires in principle that the
    coefficients of the input polynomials belong to a domain. 
    However, we are going to use
    input polynomials with coefficients in the ring $\D_\xi[\eps,\delta]$ which might not be a domain. 
    An examination of  \cite[Algorithm 10.13
    (Univariate Sign Determination)]{BPRbook2} yields that the ring of coefficients is involved only for Tarksi-queries determinations, which are computed using signs of subresultant coefficients,
     i.e. polynomial expressions in the coefficients of the input polynomials. These Tarksi-queries determinations can be performed in the ring $D_\xi[\eps,\delta]$ equipped with 
     its
     sign function  $\sign_\xi$  (cf. Definition~\ref{def:xi} Part~\eqref{itemlabel:def:xi:b})
and the correctness of the algorithm with input polynomials with coefficients in $\D_\xi[\eps,\delta]$ follows. 
\end{remark}

\begin{notation}
  [Cauchy bound] 
  \label{10:not:bigcauchy2}
  Let $ P=c_{p} X^{p} + \cdots +c_{q} X^{q} \in \D_\xi[\eps]$ with 
  $p>q, \eval_\xi(c_{q} c_{p})  \neq 0$.  We
  denote
  \begin{eqnarray}
    a(P) &=& c_q^2, \\
    b(P) &=& (p+1) \cdot \sum_{q \leq i \leq p} c_i^2.
  \end{eqnarray}
\end{notation}

It is easy to prove (see  \cite[Lemma 10.7]{BPRbook2})
that the absolute value of any non-zero root of $P$ in $\R\la\eps\ra$
  is bigger than 
  $\frac{\eval_\xi(a(P))}{\eval_\xi(b(P))}$.
  
  %%%tbc
  \begin{algorithm}[H]
\caption{(Computing Local Semi-algebraic Paths)}
\label{alg:1}
\begin{algorithmic}[1]
\INPUT
\Statex{
\begin{enumerate}[(a)]
\item
a point $x$ associated to $\xi$  of degree $d'$;
\item
a set of $s$ polynomials,
  \[ \mathcal{P} = \{P_{1}, \ldots, P_{s} \} \subset \D [X_{1} , \ldots, X_{k}],
   \]
  of degrees bounded by $d$.
\end{enumerate}
 }

		%%sb pagebreak
%%%%%%%%%%%%%%%%%%%%%%%%%%%%%%%%%%%%%%%%%%%%%%%%%%%%%%
%%%%%%%%%%%%%%%%%%%%%%%%%%%%%%%%%%%%%%%%%%%%%%%%%%%%%%
\algstore{myalg}
\end{algorithmic}
\end{algorithm}
 
\begin{algorithm}[H]
\begin{algorithmic}[1]
\algrestore{myalg}
%%%%%%%%%%%%%%%%%%%%%%%%%%%%%%%%%%%%%%%%%%%%%%%%%%%%%%
%%%%%%%%%%%%%%%%%%%%%%%%%%%%%%%%%%%%%%%%%%%%%%%%%%%%%%
%%sb end of pagebreak

 \OUTPUT
 \Statex{
 \begin{enumerate}[(a)]
 \item
 $a_0,b_0 \in \D_\xi^2$ with $\sign_\xi(a_0)=\sign_\xi(b_0)=1$;
 \item
 for each $\sigma \in \SIGN_n(x,\mathcal{P})$ (cf. Notation~\ref{not:sign_n}), 
 a set, $\mathcal{W}_\sigma$, consisting of descriptions 
of  a set, $\Phi_\sigma$, of  non-constant  semi-algebraic paths  
 \[
 \varphi:\left[0,t_0=\frac{\eval_\xi(a_0)}{\eval_\xi(b_0)}\right) \rightarrow \R^k,
 \]
 satisfying 
 \[
 \varphi(0) = x, \varphi((0,t_0))\subset \RR(\sigma,B(x,t_0)),
 \]
and with the property that the set 
\[
\{\varphi(\eps)\mid \varphi \in \Phi_\sigma\}
\]
has a non-empty intersection with every semi-algebraically connected component of  
$\RR(\sigma,\R\langle \eps \rangle^k)\cap S(x,\eps)$.
\end{enumerate}
 }

 \COMPLEXITY
 \Statex{
 \begin{enumerate}[(a)]
 \item
 The number of arithmetic operations in $\D$ is bounded by
 $ d'^{O (1)}s^{k+1} d^{O (k)}$.
 \item
  The degrees of the representations with respect to $T,U$ are bounded by 
  \[
  (2NN'^2,N) \in (O(d)^{3k+3}, O(d)^k),
  \]
  where
  \begin{eqnarray*}
  N &=& (2d +6)  (2d +5) ^{k-1}, \\
  N' &=& 5 (k(2d+4)+2) N.
  \end{eqnarray*}
 \end{enumerate}
 }

 \PROCEDURE
 \State{$\mathcal{U} \gets \emptyset$.}
\For {every set of $j \le k$ polynomials $Q_{i_{1}} \in
    P_{i_{1}}^{\star} , \ldots ,Q_{i_{j}} \in P_{i_{j}}^{\star}$  (Using Notation \ref{13:not:bigh})} 
    	\State{ 
    Apply 
    %%sb
    %%\cite[Algorithm 12.17]{BPRbookonline} 
    \cite[Algorithm 12.65]{BPRbookonline} 
    (Parametrized Bounded Algebraic Sampling) with input $Q$, 
    parameters $\eps,\delta,\gamma$, and the ring $\D_\xi$,  where 
      \begin{eqnarray*}
        Q & = & P_0^2+ Q_{i_{1}}^{2} + \cdots +Q_{i_{j}}^{2},
        \end{eqnarray*}
        and output parametrized $k$-univariate representations contained in $\D_\xi[\eps,\delta,\gamma][U]^{k+2}$.
        }
        \State{
     Apply $\lim_\gamma$ (cf. Definition~\ref{def:xi}, Part~\eqref{itemlabel:def:xi:c}) to the 
     parametrized $k$-univariate representations output in the previous step, to obtain  
     parametrized $k$-univariate representations 
        $$(f(U),g_0(U),g_1(U),\ldots,g_k(U))\in \D_\xi[\eps,\delta][U]^{k+2},$$
     and add these parametrized $k$-univariate representations to $\mathcal{U}$.
             }
             \label{alg:line:4}
\EndFor

\For {each $ u \in \mathcal{U}, u = (f(U),g_{0} (U), \ldots ,g_{k} (U)) \in \D_\xi[ \eps , \delta ]
         [U]^{k+2}$}
 \State{
   $\mathcal{P}_{u} \gets \{P_{u}\mid P \in \mathcal{P}\}$.
   }
\State{
         Use   \cite[Algorithm 11.19 (Restricted Elimination)]{BPRbook2} with variable $U$, input $f$ and $\mathcal{P}_{u}$ and
denote  $\mathcal{A}_{u}$ its output. }
\label{alg:line:8}
\EndFor
\State{
$\mathcal{A} \gets \bigcup_{u\in \mathcal{U}}
    \mathcal{A}_{u}
    \subset \D_\xi [\eps,\delta]$.
	}

\State{Using 
    Notation \ref{10:not:bigcauchy2}
     and considering elements of $\mathcal{A}$ as polynomials in $\delta$, take
    $\lambda  \in \mathcal{A}$, such that
    $\sign_\xi(a(\lambda')b(\lambda) - a(\lambda) b(\lambda')) \geq 0$   for all  $\lambda'\in \mathcal{A}$.
}
\label{alg:line:11}

		%%sb pagebreak
%%%%%%%%%%%%%%%%%%%%%%%%%%%%%%%%%%%%%%%%%%%%%%%%%%%%%%
%%%%%%%%%%%%%%%%%%%%%%%%%%%%%%%%%%%%%%%%%%%%%%%%%%%%%%
\algstore{myalg}
\end{algorithmic}
\end{algorithm}
 
\begin{algorithm}[H]
\begin{algorithmic}[1]
\algrestore{myalg}
%%%%%%%%%%%%%%%%%%%%%%%%%%%%%%%%%%%%%%%%%%%%%%%%%%%%%%
%%%%%%%%%%%%%%%%%%%%%%%%%%%%%%%%%%%%%%%%%%%%%%%%%%%%%%
%%sb end of pagebreak

\For {each $u\in \mathcal{U}, u= (f(U),g_{0} (U), \ldots ,g_{k} (U)) \in \D_\xi[ \eps , \delta ]
         [U]^{k+2},$}
  
	\State{ 
         compute the Thom encodings of the roots of $f$ and the signs of $P_u \in \mathcal{P}_u$ at the points associated to
    the 
    $k$-univariate representations in $\mathcal{U}$, using \cite[Algorithm 10.13
    (Univariate Sign Determination)]{BPRbook2}  (cf. Remark~\ref{rem:xi}) with input $f$
    and its derivatives,  and the $P_{u}$, $P \in \mathcal{P}$.
    }
    \label{alg:line:13}
\EndFor

    \State{
     Obtain $\SIGN_n(x,\mathcal{P})$ and 
    for $\sigma \in \SIGN_n(x,\mathcal{P})$, 
    let $\mathcal{V}_\sigma$ be the set of pairs $(u,\tau)$, where $u = (f(U),g_{0} (U), \ldots ,g_{k} (U)) \in \mathcal{U}$, $\tau$ the Thom encoding of a root   $\alpha$ of $\eval_\xi(f)$,
    such that for each $P \in \mathcal{P}$,  the sign  of $P_u$ at the  root $\alpha$ is $\sigma(P)$.
    }

      \For {$(u, \tau) \in V_\sigma$, with $u = (f,g_0,\ldots,g_k) \in \D_\xi[\eps,\delta,U]^{k+2}$}
      \State{
       $u^h  \gets (f^h,g_0^h, \ldots,g_k^h) \in \D_\xi[\eps,\delta_0,\delta,U] ^{k+2}$ (where
       $f^h,g_0^h, \ldots,g_k^h \in \D_\xi[\eps,\delta_0,\delta,U]$ are 
       the homogenizations of $f, g_0,\ldots,g_k \in \D_\xi[\eps,\delta,U]$ with respect to the variable $\delta$).
       }
       \State{
       $
       \mathcal{W}_\sigma \gets \{(u^h(T,b(h),a(h),U),\tau) \mid (u,\tau) \in V_\sigma \}.
       $
       }
      \EndFor

\State{Using 
   Notation \ref{10:not:bigcauchy2} and \cite[Algorithm 11.19 (Restricted Elimination)]{BPRbook2}
    with 
    variable $\delta$, input $h$ and  $\mathcal{A}\setminus \{\lambda\})$,  compute its output
   $\mathcal{B}\subset \D_\xi [ \eps ]$,
    and let
    $$\mathcal{B'}=\mathcal{B} \cup \{a(\lambda) b(\lambda')-a(\lambda') b(\lambda),\lambda'\in \mathcal{A}\setminus \{\lambda\}\}.$$
    }
    
\State{
     Using 
     Notation \ref{10:not:bigcauchy2} take  
     $a_0 = \min_{g\in \mathcal{B'}} a (g)$,  and $b_0= \max_{g\in \mathcal{B'}} b(g)$ 
     (cf. Definition~\ref{def:xi}, Part~\eqref{itemlabel:def:xi:d}).
}
\label{alg:line:21}

\end{algorithmic}
\end{algorithm}

\begin{proof}[Proof of correctness]
As a consequence of Part \eqref{itemlabel:prop:Hk:2} of Proposition \ref{prop:Hk}, 
we have that for every semi-algebraically connected component $D$ of $\RR(\sigma,\R\la\eps\ra^k) \cap S(x,\eps)$, where
$\sigma \in \SIGN_n(x,\mathcal{P})$, there exists a semi-algebraically connected component $D'$
of an algebraic set  
\[
\ZZ(P_0^2+ Q_{i_{1}}^{2} + \cdots +Q_{i_{j}}^{2}, \R\la\eps,\delta,\gamma\ra^k) \subset \Ext(S(x,\eps), \R\la\eps,\delta,\gamma\ra)
\]
with $Q_{i_{1}} \in
    P_{i_{1}}^{\star} , \ldots ,Q_{i_{j}} \in P_{i_{j}}^{\star}$, such that 
    $\lim_\gamma(D')$ is contained in 
    $\Ext(D,\R\langle \eps,\delta\rangle) 
    $, and 
    moreover, using Part \eqref{itemlabel:prop:Hk:1} of Proposition \ref{prop:Hk}, 
    $j \leq k$.
Also, notice that since 
\[
Z(P_0,\R\la \eps,\delta,\gamma\ra) =  \Ext(S(x,\eps), \R\la\eps,\delta,\gamma\ra),
\]
$D'$ is contained in $\Ext(S(x,\eps),\R\la \eps,\delta,\gamma\ra)$, 
and 
since 
\[
Z(P_0,\R\la \eps,\delta\ra) =  \Ext(S(x,\eps), \R\la\eps,\delta\ra),
\]
$\lim_\gamma(D')$ is in contained in $\Ext(S(x,\eps),\R\la \eps,\delta\ra)$. 

It now follows from the correctness of \cite[Algorithm 12.65]{BPRbookonline} (Parametrized Bounded Algebraic Sampling),
that
 the set of points associated to the 
$k$-univariate representations in $\mathcal{U}$ 
in 
Line~\ref{alg:line:4}
has a non-empty intersection with 
$\Ext(D,\R\langle \eps,\delta\rangle)
$  
for every semi-algebraically connected $D$ of $\RR(\sigma,\R\la\eps\ra^k) \cap S(x,\eps)$, 
$\sigma \in \SIGN_n(x,\mathcal{P})$.

    The correctness of  \cite[Algorithm 11.19 (Restricted Elimination)]{BPRbook2} ensures that 
    in 
    Line~ \ref{alg:line:8}
    the Thom encoding of the roots of $f$ and the signs of the polynomials in
    $\mathcal{P}$ at these roots is fixed on  any semi-algebraically connected component of the realization of a sign condition on $\mathcal{A}'(T,V)$ obtained by substituting $(T,V)$ in 
    place of $(\eps,\delta)$ in $\mathcal{A}(\eps,\delta)$. Define by $\alpha$ the sign condition on ${\mathcal A}'(T,V)$ satisfied at $(\eps,\delta)$ and by $C$ the semi-algebraically connected component of 
    $\RR(\alpha,\R^2)$ such that $(\eps,\delta)\in \Ext(C, \R\la \eps,\delta\ra)$.

In order to define a point in the extension of $C$ to $\R \langle \eps \rangle$ without changing the Thom encoding of the roots of $f$ and the signs of the polynomials in
    $\mathcal{P}$, we need to replace $\delta$ by  
     $c(\lambda) = \frac{\eval_\xi(a(\lambda))}{\eval_\xi(b(\lambda))}$
    which is small enough so that no element in ${\mathcal A}'(T,V)$ changes sign on the closed segment joining $(\eps,\delta)$  to 
   $(\eps,c(\lambda))$.
This is ensured by 
choosing $\lambda$ such that
$c(\lambda)= \min_{\lambda'\in \mathcal{A}}  c(\lambda')$ 
using the properties of the Cauchy bound
\cite[Lemma 10.7]{BPRbook2}.
This is accomplished in 
Line~\ref{alg:line:11}.

The correctness of 
the computation performed in Line~\ref{alg:line:13}
follows from the correctness of  \cite[Algorithm 10.13
    (Univariate Sign Determination)]{BPRbook2} and Remark~\ref{rem:xi}.
   
Now we need to obtain description of paths by  finding $t_0$ so that 
$(t_0,c(\lambda(t_0)))$ belongs to $C$, without  changing the Thom encoding, the signs of the polynomials in
    $\mathcal{P}$ and the choice of $h \in \mathcal{A}$. This is ensured by taking 
    \[
     t_0 = \frac{\eval_\xi(a_0)}{\eval_\xi(b_0)},
     \]
      where $a_0 = \min_{g\in \mathcal{B'}} a(g)$,  and $b_0= \max_{g\in \mathcal{B'}} b(g)$
      (in Line~\ref{alg:line:21}).
The correctness    of this step  follows from the correctness of  \cite[Algorithm 11.19 (Restricted Elimination)]{BPRbook2} and the properties of the Cauchy bound \cite[Lemma 10.7]{BPRbook2}, since the segment
    joining $(\eps,c(\lambda))$ to $(t_0,c(\lambda(t_0)))$ is entirely contained in $C$.
\end{proof}

\begin{proof}[Complexity analysis]
  The total number of $j \le k$-tuples examined is 
  \[
  \sum_{j \le k}
  \binom{s}{j} 4^{j}.
  \]
  Hence, the number of calls to \cite[Algorithm 12.65
  (Parametrized Bounded Algebraic Sampling)]{BPRbookonline} 
  is also bounded by
  $2 \sum_{j \le k} \binom{s}{j} 4^{j}$. Each such call costs $d^{O (k)}$
  arithmetic operations (addition, multiplication and computing signs) in $\D_\xi$. 
  Thus the total number of real 
  $k$-univariate representations $\mathcal{U}$ produced is
  bounded by 
  \[
  \sum_{j \le k} \binom{s}{j} 4^{j} O (d)^{k},
  \]
   while the number
  of arithmetic operations performed for computing sample points in $\R \la
  \eps , \delta \ra^{k}$, is bounded by 
  \[
  \sum_{j \le k} \binom{s}{j} 4^{j}
  d^{O (k)} =s^{k} d^{O (k)}.
  \]
  Using the complexity analysis of 
\cite[Algorithm 12.65
  (Parametrized Bounded Algebraic Sampling)]{BPRbookonline} 
  the degree in $T$ of
  the
  $k$-univariate representations output  in $\mathcal{U}$ is bounded by $N$,
  and their degrees in $\eps , \delta$ is bounded by
  $N'$,
  since $Q$ is a polynomial of degree in $X_1,\ldots,X_k$ bounded by $2d+4$ and the degrees in $\eps,\delta,\gamma$ bounded by 4.
 When $\D = \Z$, and the bitsize of the coefficients of the input polynomials
  is bounded by $\tau$, the bitsize of the 
  output 
  $k$-univariate representations in $\mathcal{U}$ is
  bounded by
  $$( \tau + \bit (s)) O(d)^{k+1}  
  \le \tau (d+k)^k O(d)^{k+1}
  ,$$ 
  since the
  total number $S$ of polynomials of degree $d$ in $k$ variables with bitsize
  bounded by $\tau$ satisfies
  \begin{eqnarray*}
    \bit(S)\leq (\tau+1) \binom{d+k}{k} \le & 2 \tau(d+k)^k . & 
  \end{eqnarray*}
  
  When we 
use   \cite[Algorithm 11.19 (Restricted Elimination)]{BPRbook2} with variable $U$, input $f$ and $\mathcal{P}_{u}$
  we output polynomials in $\mathcal{A}$ of degree in $\eps,\delta$ bounded by $2dNN'=O(d)^{2k+2}$ 
     using the complexity analysis of 
   \cite[Algorithm 11.19 
    (Restricted Elimination)]{BPRbook2}.
    Moreover, when $\D=\Z$ and $x=0$, their  bitsize  is bounded by
     $ \tau (d+k)^k O(d)^{2k+1}. 
     $ 
    The sign determination takes 
  \[
  s \sum_{j \le k}
  \binom{s}{j} 4^{j} d^{O (k)} =s^{k+1} d^{O (k)}
  \] 
  arithmetic operations in $\D_\xi$, using the complexity analysis of 
  \cite[Algorithm 10.13
    (Univariate Sign Determination)]{BPRbook2}.
    After subsituting $\delta$ by  $c(\lambda)=
    \min_{\lambda'  \in \mathcal{A}}  c (\lambda')$ in $\mathcal{U}$, 
    we obtain
 $\mathcal{V}$ of degree   $2dNN'^2=O(d)^{3k+3}$  in $\eps$ 
  and (when $\D=\Z$ and $x=0$) of bitsize 
     $ \tau (d+k)^k O(d)^{3k+2}
     $.
     
   After computing  $\mathcal{B}
   \subset \D_\xi [ \eps ]$  and taking 
 \[
 \mathcal{B'}=\mathcal{B} \cup \{a(\lambda) b(\lambda')-a (\lambda')b(\lambda),  
\lambda'\in \mathcal{A}\setminus \{\lambda \}\},
\] 
    we obtain  $\mathcal{B'}$ of
of degree
    $2dN N'^2=O(d)^{3k+3}$ in  $\eps$ . Moreover  when $\D=\Z$ and $x=0$, the  bitsize  of elements of $\mathcal{B'}$ is
     $ \tau (d+k)^k O(d)^{3k+2}$, and,
      with  $a_0 = \min_{g\in \mathcal{B'}}  a(g)$,  and $b_0= \max_{g\in \mathcal{B'}} b(g)$,
     \[
      t_0 = \frac{\eval_\xi(a_0)}{\eval_\xi(b_0)},
      \]
      is a rational number with numerator and denominator of bitsize
       $ \tau (d+k)^k O(d)^{3k+2}
       $.
       
       The final complexity bound follows from the fact that  arithmetic operations (addition, multiplication and computing signs) in $\D_\xi$ cost $d'^{O(1)}$ arithmetic operations in $\D$.
\end{proof}

\begin{remark}
An improvement of \cite[Algorithm 12.65]{BPRbookonline} (Parametrized Bounded Algebraic Sampling) with better degree bounds with respect to the main variable and the parameters would 
immediately 
improve our results.
\end{remark}
   
 \begin{proof}[Proof of Theorem    \ref{qcsltotcc}]
 Theorem \ref{qcsltotcc} is an immediate consequence of the proof of correctness and the complexity analysis of Algorithm
 \ref{alg:1}.
 \end{proof}
 
 \begin{proof}[Proof of Theorem \ref{qcsl} (Quantitative Curve Selection Lemma)]
 If 
$x\in S$,
 just pick the semi-algebraic path constantly equal to $x$.
If $x\in \bar S\setminus S$, there exists a sign condition  $\sigma \in \SIGN_n(x,\mathcal{P})$ such that $\RR(\sigma,\R^k)$ is contained in $S$. Such a $\sigma$ can be obtained by examining the description of $S$ as a $\mathcal{P}$ semi-algebraic set and the list $\SIGN_n(x,\mathcal{P})$. So  Theorem 
\ref{qcsl} 
is an immediate consequence of   Theorem \ref{qcsltotcc} without increasing the complexity.
\end{proof}

\section{Consequences}

\subsection{Degree of the Zariski closure of the image of the little path}

We now prove that the 
degree of the Zariski closure of the image of each path, $\varphi_\sigma, \sigma \in \SIGN_n(x,\mathcal{P})$ in the output 
of Algorithm \ref{alg:1},  is bounded by  
$4dN^2 N'^2=O(d)^{4k+3}$.

In order to obtain this bound we need to bound
the number of intersection points with an hyperplane $H$ of equation
$$a_0+a_1X_1+\ldots+a_kX_k=0.$$ 
Eliminating $U$ from
$f(T,U)$ and $a_0 g_0(T,U)+a_1 g_1(T,U)+\ldots+a_k g_k(T,U)$
gives a polynomial in $T$ of degree $2 N \times 2dNN'^2=4dN^2 N'^2=O(d)^{4k+3}$.

So we have proved the following.

\begin{theorem}[Degree of the Zariski closure of a little path]
For every 
\[
\sigma \in \SIGN_n(x,\mathcal{P}),
\] 
there exists 
$t_0>0$  in $\R$, and a semi-algebraic path $\varphi_\sigma:[0, t_0)\rightarrow \R^k$ such that 
\begin{enumerate}[(a)]
\item
$\varphi_\sigma (0) = x$ and $\varphi_\sigma ((0, t_0)) \subset \RR(\sigma,\R^k)$;
\item
the degree of the Zariski closure of the image of $\varphi$ is bounded by 
\[
4dN^2 N'^2=O(d)^{4k+3}.
\] 
\end{enumerate}
In particular if $S$ is a $\mathcal{P}$-semi-algebraic set and $x\in \bar S$, 
 there exists 
$t_0>0$  in $\R$, and a semi-algebraic path $\varphi: [0, t_0) \rightarrow \R^k$, such that $\varphi (0) = x$ and $\varphi ((0, t_0)) \subset S$,
and the degree of the Zariski closure of the image of $\varphi$  is bounded by $4dN^2 N'^2=O(d)^{4k+3}$.
\end{theorem}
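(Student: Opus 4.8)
The plan is to obtain this theorem as a direct corollary of Theorem~\ref{qcsltotcc} (equivalently, of Algorithm~\ref{alg:1}) together with a single elimination-theoretic degree count. Theorem~\ref{qcsltotcc} already produces, for each $\sigma \in \SIGN(x,\mathcal{P})$, a semi-algebraic path $\varphi_\sigma$ with $\varphi_\sigma(0)=x$ and $\varphi_\sigma((0,t_0)) \subset \RR(\sigma,B(x,t_0)) \subset \RR(\sigma,\R^k)$, given by a description $v = ((f,g_0,\ldots,g_k),\tau)$ with $f,g_i \in \D_\xi[T,U]$ of degrees bounded by $(2dNN'^2,N)$ in $(T,U)$. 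So the only thing left to prove is the degree bound on the Zariski closure of the image of $\varphi_\sigma$, and then specialize to a $\sigma$ with $\RR(\sigma,\R^k) \subset S$, which exists because $x \in \bar S$ (this last point is exactly the argument used in the proof of Theorem~\ref{qcsl}).

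First I would recall that the degree of an algebraic subset of $\R^k$ (or its Zariski closure) can be characterized as the maximum finite number of intersection points with an affine subspace of complementary dimension; since the image of $\varphi_\sigma$ is a semi-algebraic curve, its Zariski closure is an algebraic curve, and its degree is bounded by the maximum number of intersection points of the curve with a hyperplane $H : a_0 + a_1 X_1 + \cdots + a_k X_k = 0$, maximized over all such $H$ meeting the curve in finitely many points. Next I would translate an intersection point of the image with $H$ into an algebraic condition on the parameter $T$: a point $\varphi_\sigma(t)$ lies on $H$ iff $a_0 g_0(a_\xi,t,u(t)) + a_1 g_1(a_\xi,t,u(t)) + \cdots + a_k g_k(a_\xi,t,u(t)) = 0$, where $u(t)$ is the root of $f(a_\xi,t,U)$ selected by the Thom encoding $\tau$. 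So the $t$-values of intersection points are among the common roots in $U$ of $f(a_\xi,T,U)$ and $L(a_\xi,T,U) := \sum_{i=0}^k a_i g_i(a_\xi,T,U)$, projected to the $T$-axis.

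The key computation is then the resultant step: eliminating $U$ from $f$ and $L$ (both of degree at most $N$ in $U$ and at most $2dNN'^2$ in $T$) gives $\mathrm{Res}_U(f,L) \in \D_\xi[T]$, a polynomial in $T$ whose degree is bounded by $2N \cdot 2dNN'^2 = 4dN^2N'^2$ (the standard bound $\deg_U(f)\cdot\deg_T(L) + \deg_U(L)\cdot\deg_T(f) \le N\cdot 2dNN'^2 + N\cdot 2dNN'^2$). One must check that this resultant is not identically zero for a generic choice of $H$ — i.e.\ that $f(a_\xi,T,U)$ and $L(a_\xi,T,U)$ have no common factor depending on $U$ for generic $(a_0,\ldots,a_k)$ — which holds unless the image curve is contained in a hyperplane, in which case the degree is trivially $1$. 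Hence the number of intersection $T$-values, and a fortiori the number of intersection points of the image with $H$, is at most $4dN^2N'^2$, which gives the bound $\deg \le 4dN^2N'^2 = O(d)^{4k+3}$ on the Zariski closure of the image. Substituting the values of $N,N'$ from~\eqref{eqn:NN'} confirms the asymptotic estimate. Finally, for the ``in particular'' clause, choose $\sigma \in \SIGN(x,\mathcal{P})$ with $\RR(\sigma,\R^k) \subset S$ (possible since $x \in \bar S$) and take $\varphi = \varphi_\sigma$.

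The main obstacle is not really a hard step but a bookkeeping subtlety: one must be careful that $u(t)$ is a single continuous branch (the Thom-encoded root), so that ``$\varphi_\sigma(t) \in H$'' does correspond to a root of the resultant $\mathrm{Res}_U(f,L)$ and not to some spurious branch, and that the resultant is not identically zero. Both are handled by the genericity of $H$ and by the fact that the image is a genuine curve (one-dimensional); if it happened to be a point the statement is vacuous. Everything else is the routine resultant degree estimate $2N \times 2dNN'^2 = 4dN^2N'^2$ already recorded in the paragraph preceding the theorem.
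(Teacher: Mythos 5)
Your proposal is correct and follows essentially the same route as the paper: invoke Theorem~\ref{qcsltotcc} (Algorithm~\ref{alg:1}) to get the path with description degree $(2dNN'^2,N)$, characterize the degree of the Zariski closure via intersection with a hyperplane $H$, eliminate $U$ from $f$ and $\sum_i a_i g_i$ by a resultant, and read off the bound $2N\cdot 2dNN'^2 = 4dN^2N'^2$. You add a brief discussion of genericity of $H$ and nonvanishing of the resultant which the paper leaves implicit, but the argument is the same.
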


This is an improvement on the bound $d^k((d-1)^k+2)^{k-1}=d^{O(k^2)}$ proved in \cite[Proposition 6.2]{Kurdyka2014}.

\subsection{Finding the 
real isolated points
of a semi-algebraic set}

We now use the construction of the little paths to study zero-dimensional semi-algebraically connected components of semi-algebraic sets. We prove the following result.

\begin{theorem}[Computation of 
real isolated points
 of a semi-algebraic set]
\label{thm:zero-dim}
  Let 
   $\mathcal{P} \subset \D[X_1,\ldots,X_k]$ be a finite set of $s$ polynomials of maximum 
   total
   degree $d$,
  $S$ a $\mathcal{P}$-semi-algebraic set.
The real isolated points 
of $S$ are associated to real univariate representations that
can be computed using
$s^{2 k+1} d^{O(k)}$ 
arithmetic operations in $\D$.
\end{theorem}

Given $x\in \bar S$, we denote by $\SIGN_n (x,S)$ the sign conditions of 
$\SIGN_n (x,\mathcal{P})$ whose  realization is contained in $S$.

\begin{proof}[Proof of Theorem~\ref{thm:zero-dim}]
First compute a set of real univariate representations whose set of associated points has a non-empty intersection with  every semi-algebraically connected component of $S$ using \cite[Algorithm 13.2 (Sampling)]{BPRbook2}. 
This step uses  $s^{k+1} d^{O(k)}$ arithmetic operations in $\D$, and  produces
 $s^{k} d^{O(k)}$ real univariate representations of degrees bounded by $d' = O(d)^k$.
For each of these real univariate representations, $\xi$, with associated point $x$, 
call Algorithm~\ref{alg:1} with $\xi$ and $\mathcal{P}$ as input and obtain
$\SIGN_n(x,\mathcal{P})$, and compute the subset $\SIGN_n (x,S) \subset \SIGN_n(x,\mathcal{P})$.
The set of real isolated points  of $S$ consists of those points $x$ computed in the previous step 
for which the set $\SIGN_n (x,S)$ is empty.

Since each call to Algorithm~\ref{alg:1} uses $d'^{O(1)} s^{k+1} d^{O(k)} = s^{k+1} d^{O(k)}$ 
arithmetic operations in $\D$,
the total complexity of the algorithm described above is bounded by 
\[
s^{2 k+1} d^{O(k)}.
\] 
\end{proof}

Note that Theorem \ref{thm:zero-dim}  gives a test for a semi-algebraic set being zero-dimensional,
generalizing
\cite[Theorem 13.17]{BPRbook2}.

In the special case of an algebraic set, this is an improvement on the bound
$(kd)^{O(k \log k)}$ proved in \cite{LeMohabTimo}. 
In contrast to \cite{LeMohabTimo}, our approach using the little paths avoids using  more sophisticated techniques 
from algorithmic semi-algebraic geometry such as computing roadmaps. 

\section{Acknowledgements}

We would like to thank
 Krzysztof Kurdyka for attracting our attention to 
the question of quantitative curve selection lemma.
We are grateful to the anonymous referee for relevant remarks and suggestions.

\bibliographystyle{amsplain}
\bibliography{../../BR-aux/master}

\def\cprime{$'$}
\providecommand{\bysame}{\leavevmode\hbox to3em{\hrulefill}\thinspace}
\providecommand{\MR}{\relax\ifhmode\unskip\space\fi MR }
% \MRhref is called by the amsart/book/proc definition of \MR.
\providecommand{\MRhref}[2]{%
  \href{http://www.ams.org/mathscinet-getitem?mr=#1}{#2}
}
\providecommand{\href}[2]{#2}
\begin{thebibliography}{1}

\bibitem{BPRbook2}
S.~Basu, R.~Pollack, and M.-F. Roy, \emph{Algorithms in real algebraic
  geometry}, Algorithms and Computation in Mathematics, vol.~10,
  Springer-Verlag, Berlin, 2006 (second edition). \MR{1998147 (2004g:14064)}

\bibitem{BPRbookonline}
\bysame, \emph{Algorithms in real algebraic geometry}, Algorithms and
  Computation in Mathematics, vol.~10, Springer-Verlag, Berlin, 2016, online
  version posted on 27/06/2016, available at
  {\url{http://perso.univ-rennes1.fr/marie-francoise.roy/}}. \MR{1998147
  (2004g:14064)}

\bibitem{CR}
M.~Coste and M.-F. Roy, \emph{Thom's lemma, the coding of real algebraic
  numbers and the topology of semi-algebraic sets}, Journal of Symbolic
  Computation \textbf{5} (1988), no.~1/2, 121--129.

\bibitem{Kurdyka2014}
Zbigniew Jelonek and Krzysztof Kurdyka, \emph{Reaching generalized critical
  values of a polynomial}, Math. Z. \textbf{276} (2014), no.~1-2, 557--570.
  \MR{3150218}

\bibitem{LeMohabTimo}
Huu~Phuoc Le, Mohab~Safey el~Din, and Timo de~Wolff, \emph{Computing the real
  isolated points of an algebraic hypersurface}, ISSAC, 2020, pp.~297--304.

\bibitem{Loj1}
S.~Lojasiewicz, \emph{Ensembles semi-analytiques}, preprint, IHES, 1965.

\bibitem{Loj3}
S.~Lojasiewicz, \emph{Sur les ensembles semi-analytiques},  (1971), 237--241.
  \MR{0425152}

\bibitem{Milnor3}
John Milnor, \emph{Singular points of complex hypersurfaces}, Annals of
  Mathematics Studies, No. 61, Princeton University Press, Princeton, N.J.;
  University of Tokyo Press, Tokyo, 1968. \MR{0239612}

\bibitem{fab}
F.~Rouillier, \emph{Solving polynomial systems through the rational univariate
  representation}, Applicable Algebra in Engineering Comunication and Computing
  \textbf{9} (1999), no.~5, 433--461.

\end{thebibliography}

\end{document}